\documentclass[11pt,a4paper]{article}

\usepackage[sorting=none, sorting=nyt, maxbibnames=99, backend=biber]{biblatex}

\renewbibmacro{in:}{}
\usepackage[utf8]{inputenc}
\usepackage[T1]{fontenc}
\usepackage{amsmath,amssymb,amsthm}
\usepackage{geometry}
\usepackage{graphicx}
\usepackage{hyperref}
\usepackage{cleveref}
\usepackage{enumitem}
\usepackage{booktabs}
\usepackage{xcolor}
\usepackage{setspace}

\theoremstyle{definition}

\newtheorem{theorem}{Theorem}[section]
\newtheorem{lemma}[theorem]{Lemma}

\theoremstyle{remark}

\title{An Explicit Result for the Sum of Two Almost Primes}
\author{Adrian Dudek\thanks{School of Mathematics and Physics, The University of Queensland} \and Lachlan Dunn\footnotemark[1]}
\date{\today}

\begin{document}

\maketitle

\begin{abstract}
We show that every $N \geq 2$ can be written as the sum of positive integers $a$ and $b$ where $\Omega(ab) \leq 40$. The result is obtained through the direct application of an explicit lower bound Selberg sieve along with some computation and optimisation.
\end{abstract}

\section{Introduction}

There has been a recent expansion in using sieve methods to garner explicit results in the theory of numbers. In particular, using the explicit linear sieve developed by Bordignon, Johnston and Starichkova \cite{BJS}, the latter two authors went on to prove the following result (\cite{JohnstonStarichkova}).
\begin{theorem}
Every even integer $N \geq 4$ can be written as the sum of a prime and a positive integer $r$ such that $\Omega(r) \leq 395$. 
\end{theorem}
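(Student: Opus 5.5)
This is the Johnston--Starichkova theorem, cited here from \cite{JohnstonStarichkova}. To prove it, I would sieve the sequence $\mathcal{A} = \{N - p : p \le N,\ p \text{ prime}\}$ using the explicit linear sieve of \cite{BJS}.

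The sieve runs over the primes $q < z = N^{1/u}$ with $q \nmid N$, for a parameter $u$ to be chosen. If $r = N-p$ has no prime factor below $N^{1/u}$, then $r$ has fewer than $u$ prime factors counted with multiplicity. In that case $\Omega(r) < u$, and the target is $u \approx 395$.

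\textbf{Step 1: sieve input.} I would check the linear sieve axioms. For squarefree $d$ coprime to $N$, we have $|\mathcal{A}_d| = \pi(N; d, N) \approx \mathrm{li}(N)/\varphi(d)$, so the sieve density is $g(q) = 1/(q-1)$. This satisfies the linear sieve condition with explicit constants from Mertens-type estimates.

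\textbf{Step 2: remainder term.} The level of distribution $D = N^{\theta}$ comes from an explicit Bombieri--Vinogradov-type bound, or more crudely from explicit Brun--Titchmarsh and zero-free-region estimates. A weak explicit level, such as $\theta$ small, is acceptable because $u$ is allowed to be large. The key identity is $s = \log D/\log z = \theta u$. This is why $u$ must be large: explicit $\theta$ is tiny, so we need $s > 2$ for $f(s) > 0$.

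\textbf{Step 3: lower bound.} The lower bound sieve gives
\[
S(\mathcal{A}, z) \ge \frac{\mathrm{li}(N)}{1}\, V(z)\bigl(f(s) - \text{error}\bigr) - R,
\]
with $V(z) \asymp \mathfrak{S}(N)/\log z$. I would then subtract the terms where $N-p$ is divisible by $q^2$ with $q \ge z$; their contribution is $O(N/z)$ and is negligible. I would also handle primes $q \mid N$ with $q < z$ by noting that they contribute only boundedly to $\Omega$. Alternatively, one could include them and use that $N$ has few small prime factors.

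\textbf{Step 4: small $N$.} Together these steps give a positive count for $N \ge N_0$, with $N_0$ explicit but probably enormous. For $N < N_0$ I would use a separate argument. Either Goldbach verification up to $4\cdot 10^{18}$ covers the range, or for $N$ beyond that, a trivial bound suffices: take any prime $p$, and then $\Omega(N-p) \le \log_2 N \le 395$ whenever $N \le 2^{395}$. So I need $N_0 \le 2^{395}$.

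\textbf{Main obstacle.} The hard step is getting a genuinely explicit level of distribution with constants that are good enough. Explicit Bombieri--Vinogradov bounds carry huge constants, and the possible exceptional zero must be handled. I would try to manage the Siegel zero by excluding moduli divisible by the exceptional modulus from the sieve, and then balance $\theta$, $u$ and $N_0$ numerically until $N_0 \le 2^{395}$.
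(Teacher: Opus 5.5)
The paper does not prove this statement. It is Johnston--Starichkova's theorem, quoted as background, and the paper only says that their argument rests on the explicit linear sieve of \cite{BJS} and on intricate arguments about possible Siegel zeros. Your outline has the same architecture, but it is a plan rather than a proof. There is a genuine gap, and it lies exactly in the part that makes $395$ true.

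Your trivial reduction is correct: taking $p=2$, $\Omega(N-2)\le\log_2(N-2)$ settles every $N$ up to about $2^{396}\approx 10^{119}$. What then remains is an explicit level-of-distribution estimate with constants. You need to bound the remainder $\sum_{d<N^{s/u},\,d\mid P(z)}|r_d|$ by a small multiple of the main term $\mathrm{li}(N)V(z)\asymp N/\log^2 N$. This must hold with $u=396$, some $s>2$, and for \emph{every} $N>2^{396}$. You neither prove nor identify such an estimate. The one candidate you name, an explicit Bombieri--Vinogradov theorem, you yourself expect to work only beyond an ``enormous'' $N_0$. That contradicts your own requirement $N_0\le 2^{395}$ and leaves $2^{396}<N<N_0$ uncovered.

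Your fallback route does not give a power level at all. Brun--Titchmarsh is only an upper bound and gives no cancellation in $r_d$. A zero-free region alone gives an asymptotic for $\pi(N;d,N)$ only for $d$ far below any fixed power of $N$, classically $d\le\exp(c\sqrt{\log N})$. So a genuine mean-value input is unavoidable: the large sieve or a log-free zero-density estimate, with explicit constants and explicit exceptional-zero bounds. On top of that you need a numerical check that it closes at $u=396$. Note also that just above $2^{396}$ the required level $N^{s/u}$ is bounded. The real issue is therefore not a single threshold $N_0$, but an explicit estimate whose range of moduli grows like $N^{s/u}$, uniformly from that point on.

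Two further points. First, your Siegel-zero device has to be implemented by removing a prime $p_1\mid q_1$ of the exceptional modulus from the sifting set. You cannot simply delete the terms with $q_1\mid d$ from the weights $\lambda^-_d$ and keep a valid lower-bound sieve. Once $p_1$ is removed, survivors may be divisible by a high power of $p_1$. Then $\Omega(r)<u$ no longer follows, and you need a separate bound on $\nu_{p_1}(N-p)$, which costs extra prime factors.

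Second, two of your steps are misdescribed or unnecessary. A prime $q\mid N$ divides $N-p$ only when $p=q$, so such primes contribute nothing at all, not ``boundedly''. Discarding the $N-p$ divisible by $q^2$ with $q\ge z$ is superfluous. Since $r<N$ and every prime factor of $r$ is at least $N^{1/u}$, we already have $\Omega(r)<u$ whatever the multiplicities.
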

In the same paper, Johnston and Starichkova also prove that if one assumes the generalised Riemann hypothesis, then one can take $\Omega(r) \leq 31$ in the above. Of course, if one is contented by sufficiently large even $N$, then without any conditions we have the famous result of Chen \cite{ChenGoldbach} that one may take $\Omega(r)\leq2$.

In this paper, we consider what can be said for the result of Johnston and Starichkova if we approach the binary problem without fixing one of the two variables in $N = a + b$ to be prime. We also drop the condition that $N$ is even. This becomes a similar but less-constrained problem, and so our process is far simpler than Johnston and Starichkova, whose work had to include intricate arguments involving the possible impact of Siegel zeroes. 

The purpose of this paper is to prove the following theorem, which holds for all positive integers greater than one.

\begin{theorem} \label{maintheorem}
    Let $N \geq 2$. Then there exist positive integers $a$ and $b$ such that $N = a+b$ and $\Omega(ab) \leq 40$.
\end{theorem}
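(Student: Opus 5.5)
Our plan is to sieve the polynomial sequence $\mathcal{A}=\{n(N-n): 1\le n\le N-1\}$, a two-dimensional sieve problem, and to use an explicit lower-bound Selberg sieve to show that some element has no prime factor below $N^{1/u}$ for a suitable $u$. If $ab=n(N-n)$ has all its prime factors at least $N^{1/u}$, then since $ab\le N^2/4$ we get $\Omega(ab)< 2u$. Choosing $u$ slightly above $20$ gives $\Omega(ab)\le 40$. Primes dividing $N$ need separate treatment. If $p\mid N$, then $p\mid n$ forces $p\mid N-n$, so the local density is $\omega(p)=1$ rather than $2$. We will either keep $\omega(p)=1$ for such $p$, or first reduce to $\gcd(n,N)=1$. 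In the latter case the condition $p\mid n(N-n)$ has exactly $\omega(p)=2$ residues for $p\nmid N$ with $p>2$, and $\omega(2)=1$ when $N$ is odd. The remainder terms are trivially bounded, $|r_d|\le \omega(d)\le \tau(d)$, because we are sieving an interval, and no equidistribution input such as Bombieri--Vinogradov is needed. This is the reason the argument avoids the Siegel-zero difficulties faced by Johnston and Starichkova.

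The steps are as follows. First, set up the sieve with $X=N$, multiplicative density $g(p)=\omega(p)/p$, and dimension $\kappa=2$. Second, invoke an explicit lower bound for the Selberg sieve of dimension $2$ (a $\Lambda^2$ sieve combined with a Buchstab/Ankeny--Onishi style iteration, or an explicit DHR-type lower bound with sifting limit $\beta_2\approx 4.27$). This gives
\[S(\mathcal{A},z)\ge N\,V(z)\bigl(f_2(s)-\epsilon\bigr)-\sum_{d<D,\,d\mid P(z)}3^{\nu(d)}|r_d|,\]
where $D=z^s$. Third, bound $V(z)=\prod_{p<z}(1-\omega(p)/p)\gg (\log z)^{-2}$ explicitly using explicit Mertens estimates (Rosser--Schoenfeld / Dusart). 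Fourth, bound the error term by $\sum_{d<D}\tau(d)3^{\nu(d)}\ll D(\log D)^{c}$ explicitly. Fifth, choose $D=N^{1-\delta}$ and $z=N^{1/u}$ with $s=u(1-\delta)$ safely above the sifting limit, so that $S(\mathcal{A},z)>0$ for all $N\ge N_0$. Finally, subtract the contribution of $n$ or $N-n$ below $z$, which is only $O(z)$ terms, though it is not strictly needed since any surviving element works.

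The main obstacle is making the constants explicit and small enough that $N_0$ is computationally reachable. The explicit lower-bound function $f_2(s)$ near the sifting limit is weak, so the error terms with $3^{\nu(d)}$ weights force $N_0$ to be astronomically large unless $u$ is taken generously. The trade-off between $u$ (through the bound $40$) and $N_0$ has to be optimised numerically. For $N<N_0$ we then need a direct check. A simple route avoids enumerating every $N$: for $N$ below roughly $10^{12}$, use verified Goldbach-type data or take $a=1$ or small $a$ with $\Omega(a(N-a))\le 40$. Any $N<2^{40}$ is trivial with $a=1$, since then $\Omega(N-1)<40$. For larger $N$ up to $N_0$, choose $a=2^k$-type values or run a lightweight search. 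If $N_0$ exceeds what computation covers, we would refine by restricting $n$ to an interval and trying several choices of $\delta$.
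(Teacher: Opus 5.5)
Your overall architecture matches the paper's: a two-dimensional sieve on $n(N-n)$, trivial remainders $|r_d|\le\tau(d)$, the size argument giving $\Omega<2u$, explicit Mertens bounds, and computation for small $N$. There are, however, two genuine gaps.

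\textbf{The prime $2$ and the value of $u$.} Your treatment of the prime $2$ is wrong, and it affects the final count. If $N$ is odd, then exactly one of $n$, $N-n$ is even. So $2\mid n(N-n)$ for every $n$, whether or not you impose $\gcd(n,N)=1$. The local density is therefore $\omega(2)=2$, not $1$; the value $1$ is the one for even $N$. Hence $g(2)=1$, $V(z)=0$, and no element $n(N-n)$ is free of primes below $z$. The paper fixes this by sieving $a_n=n(N-n)/2$ when $N$ is odd, which has $g(2)=1/2$. But then $\Omega(ab)=1+\Omega(a_n)$, so the size argument must deliver $\Omega(a_n)\le 39$, i.e.\ essentially $u\le 20$. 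This is why the paper takes $z=N^{1/20}$ exactly and obtains $39$ for even $N$ and $40$ for odd $N$. With $u$ \emph{slightly above} $20$ you would only get $\Omega(ab)\le 41$ for odd $N$.

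\textbf{The explicit sieve input.} You never fix an explicit two-dimensional lower-bound sieve, and $u\approx 20$ is read off from the target rather than derived. An explicit DHR sieve near $\beta_2\approx 4.27$ would give a far smaller exponent, so that is not what is actually available. The paper uses Theorem~\ref{LowerBoundSieveTheorem}, the $\Lambda^-\Lambda^2$ sieve of Friedlander--Iwaniec. Its main-term factor $1-\frac{s+3}{2e^k}\bigl(\frac{2ek}{s-3}\bigr)^{(s-3)/2}$ depends on $k=2+\log K$. So one must first prove $\prod_{w\le p<z}(1-g(p))^{-1}\le K(\log z/\log w)^2$ uniformly for $2\le w<z$ with an explicit $K$; this is Lemma~\ref{lem:K}, giving $K=3$. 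That yields positivity only for $s>18.3$, and the choice $s=18.4$, $D=N^{0.92}$ is what makes $u=20$ possible.

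\textbf{The range below $N_0$.} Even with these choices, the analytic estimate only becomes effective around $N\approx 10^{200}$. Your plan below $N_0$ is $a=1$ up to $2^{41}$, then $a=2^k$-type choices or a \emph{lightweight search}. This cannot cover $2^{41}<N\le 10^{200}$: a special choice of $a$ gives no control on $\Omega(N-a)$, and a per-$N$ search is hopeless.

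The missing idea is to verify the sieve inequality numerically as a function of $z$ rather than of $N$. For $N\le 10^{200}$ one has $z=N^{1/20}\le 10^{10}$, so both $\tfrac12\prod_{2<p<q}(1-2/p)$ and the Rankin bound $D^{\delta}\prod_{p<z}(1+8p^{-\delta})$ can be evaluated directly. Monotonicity in $z$ then certifies $S(A,z)\ge S(q_i,q_{i+1})>0$ for all $z\in(q_i,q_{i+1}]$, along a sparse sequence of primes $q_i$. The paper handles $N\le 4\cdot 10^{18}$ separately via maximal prime gaps, taking $a$ to be the largest prime below $N$.
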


Although one may also approach this problem using the explicit linear sieve, the resulting calculations involve versions of the Bombieri--Vinogradov theorem that would ultimately provide a worse result than our approach. As such, we apply an explicit version of Selberg's $\Lambda^-\Lambda^2$ lower bound sieve from Friedlander and Iwaniec \cite{FI2010}. Using their sieve as a starting point, the approach is fairly direct and employs mostly standard estimates in analytic number theory along with some optimisation and computation. 

We start by setting up the sieve in the standard way (see, for example, Chapter 5 of \cite{FI2010}). Let $A$ be a finite sequence of positive integers, and let $|A|$ denote the number of terms of the sequence. Let $\mathcal{P}$ be the set of all prime numbers, and define
\begin{equation}\label{Pz}
    P(z):=\prod_{\substack{p<z\\p\in\mathcal{P}}}p.
\end{equation}
Let $S(A,z)$ denote the number of terms of the sequence $A$ that are not divisible by any prime $p \in \mathcal{P}$ such that $p < z$.
Furthermore, for any square-free integer $d$, let $|A_d|$ denote the number of terms of the sequence $A$ that are divisible by $d$, and let $g(d)$ be a multiplicative function with
\begin{equation*}
    0\leq g(p)<1\ \text{for all}\ p\in\mathcal{P}.
\end{equation*}
Define the remainder term
\begin{equation*}
    r_d(A):=|A_d|-|A|g(d).
\end{equation*}
We further consider a standard condition on the function $g$, namely that
\begin{equation}\label{sieveDimension}
    \prod_{w \leq p < z} (1-g(p))^{-1} \leq K \bigg( \frac{\log z}{\log w} \bigg)^{\kappa}
\end{equation}
for all $z > w \geq 2.$ Finally, we define
\begin{equation}
    V(z) := \prod_{p \mid P(z)} \left(1- g(p)\right).
\end{equation}
With the set up complete, we call on Theorem 7.7 from Friedlander and Iwaniec \cite{FI2010} which is an explicit form of Selberg's $\Lambda^-\Lambda^2$ lower bound sieve.

\begin{theorem} \label{LowerBoundSieveTheorem}
Let $g(d)$ be a density function which satisfies (\ref{sieveDimension}). Associated with it let $\Lambda = \Lambda^{-} \Lambda^{+} = \Lambda^{-} \Lambda^{2}$ be the lower-bound sieve of level $D$ described in \cite{FI2010}. Let $s:=\log D / \log z$, $k := \kappa + \log K$ and $X:=|A|$. Then we have, provided $s\geq 2k + 3$,
\begin{equation}\label{eq:7.121}
S(A, z) \geq XV(z) \left\{1 - \frac{s+3}{2e^k} \left(\frac{2ek}{s-3}\right)^{(s-3)/2}\right\} - 2R_4(A, D),
\end{equation}
where
\[
R_4(A, D) = \sum_{\substack{d\mid P(z) \\ d<D}} \tau_4(d)|r_d(A)|,
\]
and $\tau_4(d)$ denotes the 4-fold divisor function.
\end{theorem}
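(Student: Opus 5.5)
The statement is quoted directly from Friedlander and Iwaniec (Theorem 7.7 of \cite{FI2010}), so in the paper it is simply invoked. If I were to prove it, I would follow the $\Lambda^-\Lambda^2$ construction. Take $\Lambda^+(n)=\bigl(\sum_{d\mid (n,P(z))}\rho_d\bigr)^2$ with Selberg weights $\rho_1=1$, $\rho_d=0$ for $d\ge \sqrt{D_2}$. Take the crude lower-bound weight
\[
\Lambda^-(n)=1-\sum_{p\mid (n,P(z))}1 .
\]

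\textbf{Pointwise inequality.} If $(n,P(z))=1$, then $\Lambda^-(n)=\Lambda^+(n)=1$. Otherwise $\Lambda^-(n)\le 0\le \Lambda^+(n)$. Hence $\Lambda^-(n)\Lambda^+(n)\le \mathbf{1}_{(n,P(z))=1}$, and therefore
\[
S(A,z)\ge \sum_{n\in A}\Lambda^+(n)-\sum_{p\mid P(z)}\sum_{\substack{n\in A\\ p\mid n}}\Lambda^+(n).
\]

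\textbf{Splitting into main term and remainder.} Expand $\Lambda^+(n)=\sum_{d_1,d_2}\rho_{d_1}\rho_{d_2}\mathbf 1_{[d_1,d_2]\mid n}$ and replace $|A_m|$ by $Xg(m)+r_m(A)$. The main term becomes $X\bigl(G(1)-\sum_p G(p)\bigr)$, where
\[
G(q)=\sum_{d_1,d_2}\rho_{d_1}\rho_{d_2}\,g([d_1,d_2,q]).
\]
The remainder is a sum of $|r_m(A)|$ over squarefree $m=[d_1,d_2]$ or $m=[d_1,d_2]p$ dividing $P(z)$. With Selberg's normalisation $|\rho_d|\le 1$, the multiplicity with which each $m$ occurs is bounded by the number of ways of writing $m$ as such a combination, which is at most $\tau_4(m)$. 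The two pieces (the "$1$" and the "$-\sum_p$") account for the factor $2$ in front of $R_4(A,D)$. I would choose $D_2$ and the range of $p$ so that every $m$ occurring satisfies $m<D$; this is where the shift $s\mapsto s-3$ in the final bound comes from.

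\textbf{Main term.} For the first part I would use the standard Selberg diagonalisation, which gives $G(1)=1/J$ with $J\ge V(z)^{-1}(1-\text{tail})$. For $\sum_p G(p)$ I would bound $G(p)\le g(p)G^{(p)}$ using the same quadratic form. Summing over $p$ then gives $G(1)\sum_p g(p)/(1-g(p))$-type terms that cancel against the leading $1$ up to a tail.

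\textbf{Main obstacle: bounding the tail.} The tail is the contribution of $d\mid P(z)$ with $d\ge \sqrt{D/z}$, roughly, weighted by $h(d)=\prod_{p\mid d} g(p)/(1-g(p))$. I would bound it by Rankin's trick:
\[
\sum_{d\ge y} h(d)\le y^{-\varepsilon}\prod_{p<z}\bigl(1+h(p)p^{\varepsilon}\bigr).
\]
The Euler product is controlled via (\ref{sieveDimension}), which gives a factor $\exp(k\,z^{\varepsilon}\text{-type})$ with $k=\kappa+\log K$. The natural choice is $\varepsilon\log z=\tfrac{s-3}{2}\cdot\tfrac{1}{k}\cdot(\text{const})$. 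Optimising $\varepsilon$ then produces the factor $\bigl(2ek/(s-3)\bigr)^{(s-3)/2}e^{-k}$, and the requirement $s\ge 2k+3$ ensures the optimal $\varepsilon$ is admissible. The linear factor $(s+3)/2$ comes from summing this bound over the two pieces and over $p$. Getting these explicit constants exactly is the delicate bookkeeping step.
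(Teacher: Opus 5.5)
You are right that the paper proves nothing here: the statement is Theorem 7.7 of \cite{FI2010} quoted verbatim, so your opening sentence is all the paper does. Several parts of your sketch are also sound:
\begin{itemize}
\item the architecture, namely $\Lambda^-(n)=1-\sum_{p\mid(n,P(z))}1$ times a $\Lambda^2$-sieve supported on $d<y$ with $y=(D/z)^{1/2}$, together with $\Lambda^-\Lambda^+\le\mathbf{1}_{(n,P(z))=1}$;
\item the Rankin optimisation: $V(z)\sum_{b\mid P(z),\,b\ge z^t}h(b)\le e^{-\alpha t+k(e^\alpha-1)}$ is minimised at $e^\alpha=t/k$, giving $e^{-k}(ek/t)^t$, and this is admissible iff $t\ge k$.
\end{itemize}
As a reconstruction, however, the sketch has a genuine gap at the main term, which is where the content of the theorem lies. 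Terms $G(1)\sum_p g(p)/(1-g(p))$ cancelling against the leading $1$ are not what happens, since that sum is of order $\log\log z$. With Selberg's optimal weights the $p$-pieces are computed exactly. Put $\sigma_e=\rho_e+\rho_{pe}$ and diagonalise with $y_b=\sum_{b\mid d}\rho_d g(d)=\mu(b)h(b)/J$ for $b<y$. The linear forms become $y_b+y_{bp}/h(p)$, which vanish unless $b<y\le bp$. Hence $G(p)=g(p)J^{-2}W_p$ with $W_p=\sum_{p\nmid b,\ y/p\le b<y}h(b)$, and the main term is $\frac{X}{J}\bigl(1-\frac1J\sum_{p<z}g(p)W_p\bigr)$. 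These windows lie in $[y/z,y)$, and $y/z=z^{(s-3)/2}$ is where $t=(s-3)/2$ comes from. Your threshold $\sqrt{D/z}=z^{(s-1)/2}$ gives the wrong exponent.

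The decisive step is to bound $V(z)\sum_p g(p)W_p$ \emph{uniformly in $z$}, and your route fails there. Rankin for each $p$ gives $W_p\le (y/p)^{-\varepsilon}\prod_{q<z}(1+h(q)q^{\varepsilon})$. Summing over $p$ then brings in $\sum_{p<z}g(p)p^{\varepsilon}\ge\sum_{p<z}g(p)$, which is unbounded as $z\to\infty$ (of order $2\log\log z$ for the $g$ used in the paper). So the linear factor $(s+3)/2$ cannot come from summing over $p$. You must exploit that $b$ is confined to a window of logarithmic length $\log p$, which is short for small $p$. Heuristically this trades $\sum_p g(p)$ for $\sum_{p<z}g(p)\log p/\log z\le k$, an inequality that follows from (\ref{sieveDimension}) by partial summation. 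This is a missing idea, not bookkeeping.

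Your remainder count is also wrong as stated. For squarefree $m$ with $\omega(m)=r$, the $-\sum_p$ piece reaches $m$ via $4r\,3^{r-1}$ triples $(p,d_1,d_2)$ with $[d_1,d_2,p]=m$. This exceeds $\tau_4(m)=4^r$ for $2\le r\le 8$, and even $3^r+4r\,3^{r-1}>2\cdot 4^r$ for $r=2,3,4$ (e.g.\ $33>32$ at $r=2$). The factor $2\tau_4$ appears only after collecting the full coefficient
$\lambda_m=(1-\omega(m))\sum_{[d_1,d_2]=m}\rho_{d_1}\rho_{d_2}-\sum_{p\mid m}\sum_{[d_1,d_2]=m/p}\rho_{d_1}\rho_{d_2}$.
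For this, $|\rho_d|\le 1$ gives $|\lambda_m|\le(4r-3)3^{r-1}\le 2\tau_4(m)$ for $m>1$.
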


The proof of Theorem \ref{maintheorem} is a direct application of the above explicit sieve result along with some optimisation and computation. We first apply the sieve to find some $N_0$ such that Theorem \ref{maintheorem} is true for all $N \geq N_0$. We then employ computational arguments to fill in the remaining range. 

We begin in Section~\ref{sec:lems} by constructing explicit bounds on each of the terms used in Theorem~\ref{LowerBoundSieveTheorem}. We then prove the main result in Section~\ref{analyticSection}, and discuss possible improvements to our result in Section~\ref{improvements}.

\section{Preliminary Lemmas}\label{sec:lems}

In this section, we furnish the estimates that are required in the application of the sieve. Let $N$ be a positive integer and let $a_n=n(N-n)$ if $N$ is even and $a_n = n(N-n)/2$ if $N$ is odd. We define our sequence to be $A=(a_n)_{n=1}^{N-1}$. The value $g(p)$ will estimate the proportion of terms in $A$ that are divisible by $p$. It is clear that $g(2)=1/2$. Additionally, if $p$ (where $p \neq 2$) divides $N$, then $p\mid n$ implies that $p\mid N-n$, and hence $g(p)=1/p$. Otherwise, $p\mid n$ implies that $p\nmid N-n$ and $p\mid N-n$ implies that $p\nmid n$, so that $g(p)=2/p$. Hence, we have $$g(p) = \begin{cases}
    \frac{1}{2} & \text{if }p=2, \\
    \frac{2}{p} & \text{if $p\neq2$ and $p \nmid N$}, \\
    \frac{1}{p} & \text{if $p\neq2$ and $p \mid N$}.
\end{cases}$$ See for example Chapter 7 of Nathanson \cite{Nathanson} for a more extensive explanation.

Given that $\kappa = 2$ (as we will be sieving two residue classes for each appropriate prime $p$), we seek to find the least such value $K$ such that (\ref{sieveDimension}) holds uniformly for $z > w \geq 2$. We note that our result $K=3$ is best possible for the given sieving range (arising in Case 4 of the following proof).  Our result is as follows.

\begin{lemma}\label{lem:K}
    Let $g$ be the multiplicative arithmetic function given by
    $$g(p) = \begin{cases}
    \frac{1}{2} & \text{if }p=2, \\
    \frac{2}{p} & \text{if $p\neq2$ and $p \nmid N$}, \\
    \frac{1}{p} & \text{if $p\neq2$ and $p \mid N$}.
\end{cases}$$
 We have $$\prod_{w \leq p < z}(1-g(p))^{-1} \leq  3\left(\frac{\log z}{\log w} \right)^2$$ for $2 \leq w<z$.
\end{lemma}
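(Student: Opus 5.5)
Since $1-g(p)\geq 1-2/p$ for every odd prime $p$ (with equality when $p\nmid N$), and $1-g(2)=1/2$, the worst case is $g(p)=2/p$ for all odd $p$. We therefore reduce to bounding
\[
F(w,z):=\prod_{w\le p<z}(1-g(p))^{-1}\le \Big(2\ \text{if } w\le 2\Big)\cdot\prod_{\substack{w\le p<z\\ p\ge 3}}\Big(1-\frac{2}{p}\Big)^{-1},
\]
where the factor $2$ from $p=2$ appears only when $w=2$. The task is then to show $F(w,z)\,(\log w/\log z)^2\le 3$ uniformly.

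The key identity is $(1-2/p)^{-1}=(1-1/p)^{-2}\,(1-1/(p-1)^2)^{-1}$. Writing $C=\prod_{p\ge3}(1-1/(p-1)^2)^{-1}$, which is $1/C_2$ with $C_2\approx0.66016$ the twin prime constant, we get
\[
F(w,z)\le 2^{[w\le2]}\;\prod_{\substack{w\le p\\ p\ge3}}\Big(1-\frac{1}{(p-1)^2}\Big)^{-1}\prod_{\substack{w\le p<z\\ p\ge 3}}\Big(1-\frac1p\Big)^{-2}.
\]
The last product we control with explicit Mertens bounds of Rosser--Schoenfeld type,
\[
\frac{e^{-\gamma}}{\log x}\Big(1-\frac{1}{2\log^2 x}\Big)<\prod_{p\le x}\Big(1-\frac1p\Big)<\frac{e^{-\gamma}}{\log x}\Big(1+\frac{1}{2\log^2x}\Big)\qquad(x>1),
\]
applied at $z$ and at $w$. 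This gives $\prod_{w\le p<z}(1-1/p)^{-1}\le \frac{\log z}{\log w}\cdot\frac{1+1/(2\log^2 w)}{1-1/(2\log^2 z)}$, up to care with whether the endpoint $w$ is included. Hence for $w$ beyond a modest threshold $w_0$ (a few hundred, say) the ratio $F(w,z)(\log w/\log z)^2$ is at most $C\prod_{p\ge w}(1-1/(p-1)^2)^{-1}$ times correction factors tending to $1$, which is far below $3$.

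For $w<w_0$ we split into cases according to the position of $w$ and $z$ among small primes.

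\textbf{Case 1:} both $w$ and $z$ are large; this is the case just handled.

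\textbf{Case 2:} $w<w_0$ and $z\le Z_0$ is bounded. Here a finite computation suffices. For fixed consecutive primes the product is constant while $(\log w/\log z)^2$ is maximised by taking $w$ at the upper end of its prime gap and $z$ at the lower end. So it is enough to check the ratio with $w$ equal to the next prime $\ge w$ and $z$ just above the largest prime below $z$, that is, at finitely many prime pairs.

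\textbf{Case 3:} $w<w_0$ and $z$ large. We write $F(w,z)=F(w,w_0)F(w_0,z)$, bound $F(w_0,z)$ by the Mertens estimate, and take the maximum of the computed finite part against $\log^2 w$.

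\textbf{Case 4:} the extremal configuration is $w=2$ with $z$ just above $3$. The product is then $2\cdot 3=6$ and $(\log2/\log3)^2\approx0.398$, giving about $2.39$. Near $z\to3^+$ with $w=3$ the product is $3$ and the ratio tends to $3$. This is why $K=3$ is sharp, and we verify directly that the supremum approaches but does not exceed $3$.

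The main obstacle is making the uniformity rigorous in the intermediate range, where $w$ is small but $z$ is huge. There the explicit Mertens error terms must be tracked so that the combined constant stays below $3$. We also need to confirm that no small prime pair beats the $w=3$, $z\to3^+$ extremum. We would first try to show the ratio is monotone in $z$ beyond a small bound for each fixed small $w$, then check finitely many cases numerically.
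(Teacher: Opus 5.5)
Your proposal is correct and has the same structure as the paper's proof: an analytic bound once $w$ passes a threshold (the paper takes $286$); the splitting $F(w,z)=F(w,w_0)F(w_0,z)$ with a finite computation over primes $w<w_0$ when $z$ is large, which already settles the range you flag as the main obstacle, so no monotonicity argument is needed; and a finite check over prime pairs when $z<w_0$, where the supremum $3$ is approached at $w=3$, $z\to3^+$ (your $w=2$ configuration gives only about $2.39$, as you note). The one real difference is in the large-$w$ case: you use $(1-2/p)^{-1}=(1-1/p)^{-2}(1-1/(p-1)^2)^{-1}$ with Rosser--Schoenfeld's Mertens-product bounds, where the paper uses a Taylor expansion of $-\log(1-2/p)$ with their bounds for $\sum 1/p$; note that the lower product bound with $1/(2\log^2 x)$ holds only for $x\ge 285$, not $x>1$, so take $w_0\ge285$, and drop the superfluous factor $C$ in your Case 1 estimate, after which your constant (about $1.07$) is slightly better than the paper's $1.24$.
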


\begin{proof}

We break the proof into four cases. The computer verification of Cases 2 and 4 was completed using the ``verifyK.jl'' file \cite{gitrepo}.

\singlespacing
\noindent \textbf{Case 1: $w \geq 286$}

From the Taylor series expansion of $\log(1-u)$ at $u=0$, we have that
\begin{align*}
    \log\left(\prod_{w \leq p\leq z}(1-g(p))^{-1}\right) &\leq \sum_{w \leq p\leq z}-\log\left(1-\frac{2}{p}\right)\\
    &\leq 2\sum_{w \leq p\leq z}\frac{1}{p}+\sum_{w\leq p\leq z}\sum_{k=2}^\infty\frac{1}{2}\left(\frac{2}{p}\right)^k \\
    & = 2\sum_{w \leq p\leq z}\frac{1}{p}+\sum_{w \leq p\leq z}\frac{2}{p(p-2)}.
\end{align*}
Using Theorem 5 of Rosser and Schoenfeld \cite{Rosser1962} we have that (for $z \geq 286)$
\begin{align*}
     2\sum_{w \leq p\leq z}\frac{1}{p}< 2\log\left(\frac{\log z}{\log w}\right) + \frac{1}{\log^2z}+\frac{1}{\log^2 w},
\end{align*}
so in turn we have the bound
\begin{align*}
    \log\left(\prod_{w \leq p\leq z}(1-g(p))^{-1}\right) &\leq 2\log\left(\frac{\log z}{\log w}\right) + \frac{1}{\log^2z}+\frac{1}{\log^2 w}+\sum_{w \leq p\leq z}\frac{2}{p(p-2)}\\
    &\leq 2\log\left(\frac{\log z}{\log w}\right) + 0.21,
    \end{align*}
where we have noted that 
$$\sum_{w\leq p \leq z} \frac{1}{p(p-2)} \leq \sum_{3\leq p} \frac{1}{p(p-2)} - \sum_{3 \leq p \leq 285} \frac{1}{p(p-2)} \leq \frac{1}{3}+\sum_{p \geq 3}\frac{1}{p^2} - 0.463 < 0.073.$$
Therefore, 
$$\prod_{w \leq p\leq z}(1-g(p))^{-1} \leq 1.24\left(\frac{\log z}{\log w}\right)^2,$$
and hence $K \leq 1.24$ for $w \geq 286$.

\singlespacing
\noindent \textbf{Case 2: $3 \leq w \leq 285,z \geq 286$}
    
We have
\begin{align*}
    \prod_{w \leq p\leq z}(1-g(p))^{-1} &\leq \left(\prod_{w \leq p\leq 285}\left(1-\frac{2}{p}\right)^{-1}\right)\left( \prod_{286 \leq p\leq z}(1-g(p))^{-1}\right)\\
    & \leq 1.24\left(\frac{\log w}{\log(286)}\right)^2\left( \prod_{w \leq p\leq 285}\Big(1-\frac{2}{p}\Big)^{-1} \right) \left(\frac{\log z}{\log w}\right)^2,
\end{align*}
where we have bounded the second product using Case 1. By computation of all values of $$1.24\left(\frac{\log w}{\log(286)}\right)^2\left( \prod_{w \leq p\leq 285}\left(1-\frac{2}{p}\right)^{-1} \right)$$ with $3\leq w\leq 285$ (and $w$ prime), we see that $K \leq 1.86$ for $3 \leq w \leq 285$ and $z \geq 286$.

\singlespacing
\noindent \textbf{Case 3: $w=2,z \geq 286$}

By applying Case 2, we have that
\begin{align*}
    &\prod_{2\leq p\leq z}(1-g(p))^{-1} = 2 \prod_{3 \leq p \leq z}(1-g(p))^{-1} \leq 2\cdot 1.86 \left(\frac{\log^2z}{\log^2(3)}\right)\leq 1.49\left(\frac{\log^2z}{\log^2(2)}\right),
\end{align*}
and hence $K \leq 1.49$ for $w=2,z \geq 286$.

\singlespacing
\noindent \textbf{Case 4: $z<286$}

From computer verification, we have that $K \leq 3$ for $z < 286$ (which occurs when $w=3$, $z=w+\epsilon$ for small $\epsilon>0$). This completes the proof.
\end{proof}

We also need an estimate for $V(z)$ at the range of $z$ for which we intend to bound the sieve analytically. In our case, this estimate is required for $z \geq 10^{10}$.

\begin{lemma}\label{lem:V}
    We have that
    $$V(z)  \geq \frac{0.2749}{\log^2(z)}$$
    for $z \geq 10^{10}$.
\end{lemma}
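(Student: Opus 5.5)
We need a lower bound for $V(z)=\prod_{p<z}(1-g(p))$ that holds for every $N$. Since $1-1/p \geq 1-2/p$, each factor with $p\neq 2$ is at least $1-2/p$, and the factor at $p=2$ is exactly $1/2$. So the worst case is when $N$ has no odd prime factors below $z$, and
\begin{equation*}
V(z) \geq \frac12 \prod_{3\leq p<z}\Bigl(1-\frac{2}{p}\Bigr).
\end{equation*}
The plan is to compare this product with $\prod_{p<z}(1-1/p)^2$, for which Rosser and Schoenfeld give explicit bounds.

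Write $1-2/p = (1-1/p)^2\bigl(1-\frac{1}{(p-1)^2}\bigr)$. Then
\begin{equation*}
\frac12\prod_{3\leq p<z}\Bigl(1-\frac{2}{p}\Bigr) = \frac12\cdot\frac{1}{(1/2)^2}\prod_{p<z}\Bigl(1-\frac1p\Bigr)^2\prod_{3\leq p<z}\Bigl(1-\frac{1}{(p-1)^2}\Bigr) = 2\,C(z)\prod_{p<z}\Bigl(1-\frac1p\Bigr)^2,
\end{equation*}
where $C(z)=\prod_{3\leq p<z}\bigl(1-\frac{1}{(p-1)^2}\bigr)$. This $C(z)$ decreases to the twin prime constant $C_2=0.6601618\ldots$, so $C(z)\geq C_2$ for every $z$.

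Next apply Rosser and Schoenfeld's explicit Mertens bound (Theorem 7 of \cite{Rosser1962}):
\begin{equation*}
\prod_{p\leq x}\Bigl(1-\frac1p\Bigr) > \frac{e^{-\gamma}}{\log x}\Bigl(1-\frac{1}{2\log^2 x}\Bigr)\quad\text{for } x>1.
\end{equation*}
Squaring gives $V(z)\geq 2C_2e^{-2\gamma}\bigl(1-\frac{1}{2\log^2 z}\bigr)^2/\log^2 z$. Numerically $2C_2e^{-2\gamma}\approx 2(0.66016)(0.31524)\approx 0.41622$. At $z=10^{10}$ we have $\log^2 z\approx 530.2$, so the correction factor is about $(1-0.000943)^2\approx 0.9981$, and the constant is about $0.4154$. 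This comfortably exceeds $0.2749$, and the bound only improves as $z$ grows.

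Two technicalities need care. The first is the boundary: the product is over $p<z$, while the Mertens bound is stated for $p\leq x$. If $z$ is prime, take $x=z-1$ or simply note that $\prod_{p<z}\geq\prod_{p\leq z}$, which is fine since all factors are below $1$.

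The second, the main obstacle, is the numerics. I would check that $0.2749$ is not intended with a slack I am missing, for example a cruder bound such as $\prod_{p<z}(1-2/p)\geq c/\log^2 z$ obtained via the Case 1 style argument in Lemma~\ref{lem:K}. That route exponentiates $-2\sum 1/p - \sum 2/(p(p-2))$ with the Rosser--Schoenfeld bound for $\sum 1/p$ and would give a constant near $0.27$. Either route suffices; I would present the twin-prime-constant route, since it gives the cleaner and stronger constant and makes the inequality evident.
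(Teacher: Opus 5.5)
Your proof is correct, and it takes a different route from the paper's. The paper works with logarithms. It writes $-\log\prod_{2<p<z}(1-2/p)=2\sum_{2<p<z}\frac1p+R(z)$ and bounds the sum by Rosser--Schoenfeld's Theorem~5, the sum form of Mertens, which gives $\log\log z-0.2366$. It then bounds the higher Taylor terms crudely via $\tfrac12 R(z)\le\sum_{p\ge 3}\frac{1}{p(p-2)}<\frac13+\sum_{p\ge3}\frac{1}{p^2}<0.5357$. Exponentiating gives $\tfrac12 e^{-0.5982}\approx 0.2749$, which is exactly the ``cruder'' route you anticipated.

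Your factorisation $1-\frac2p=(1-\frac1p)^2\bigl(1-\frac{1}{(p-1)^2}\bigr)$ instead puts all second-order terms into the convergent product $C(z)\ge C_2$. You then use the product form of Mertens (Theorem~7). Nothing is lost in the higher-order terms, and you get essentially the true asymptotic constant $2C_2e^{-2\gamma}\approx 0.416$. The paper's bound $R(z)<1.0714$, against a true limiting value of about $0.660$, is what costs it a factor of roughly $0.66$.

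The paper's version is more economical. It reuses exactly the inputs of Lemmas~\ref{lem:K} and~\ref{lem:R} (Theorem~5 and $\sum p^{-2}$) and needs no numerical value of $C_2$. Your sharper constant is not needed there, since the lemma is only used in Case~3 of Theorem~\ref{thm:last}, where the margin is enormous.

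One citation detail needs correcting. The lower bound $\prod_{p\le x}(1-\frac1p)>\frac{e^{-\gamma}}{\log x}\bigl(1-\frac{1}{2\log^2 x}\bigr)$ does not hold for all $x>1$. At $x=5$ the left side is $\frac{4}{15}\approx 0.267$ while the right side is about $0.2815$. To my recollection, Rosser and Schoenfeld state that form for $x\ge 285$, with $\frac{1}{\log^2 x}$ in place of $\frac{1}{2\log^2 x}$ for all $x>1$. Since $z\ge 10^{10}$, this is harmless, and even the weaker form gives a constant of about $0.4146$.
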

\begin{proof}
    To start, we note
\begin{align*}
    V(z) = \prod_{p<z}(1-g(p)) \geq \frac{1}{2}\prod_{2<p<z}\Big(1-\frac{2}{p}\Big).
\end{align*}
From the Taylor series expansion of $\log(1-u)$ at $u=0$, we have
\begin{align*}
    -\log\Big(\prod_{2<p<z}\Big(1-\frac{2}{p}\Big)\Big) = \sum_{2<p<z}-\log\Big(1-\frac{2}{p}\Big) = 2\sum_{2<p<z}\frac{1}{p}+ R(z),
\end{align*}
where $R(z) = \sum_{2<p<z}\sum_{k=2}^\infty\frac{1}{k}(\frac{2}{p})^k$. By Theorem 5 of Rosser and Schoenfeld \cite{Rosser1962}, we have that
\begin{align*}
    \sum_{2<p<z}\frac{1}{p} =\sum_{p<z}\frac{1}{p}- \frac{1}{2} < \log\log z -0.2366 \text{ for } z \geq 10^{10}.
\end{align*}
Additionally, we have 
\begin{align*}
    \frac{1}{2}R(z) \leq \sum_{2<p<z}\sum_{k=2}^\infty\frac{1}{4}\Big(\frac{2}{p}\Big)^k = \sum_{2<p<z} \frac{1}{p(p-2)} < \frac{1}{3}+\sum_{2<p<z}\frac{1}{p^2}<0.5357.
\end{align*}
Hence the result follows.
\end{proof}

The following lemma provides a bound on the remainder term for our sieve. Notably, we make the choice that $D = z^{18.4}$ for our application, that is, $s = 18.4$, and this can be seen in the statement of the lemma. 

\begin{lemma}\label{lem:R}
We have that
    $$R_4(A,z^{18.4}) < 0.591\, z^{18.4} \log^8 z$$
for all $z \geq 10^{10}.$
\end{lemma}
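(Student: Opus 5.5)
The plan is to reduce $R_4(A,z^{18.4})$ to a weighted count of the squarefree $d \mid P(z)$, and then to bound that count by an Euler product over primes $p<z$. The point of the product over $p<z$, rather than a divisor sum up to $D$, is that it produces $\log^8 z$ with a small constant instead of $\log^8 D = 18.4^8\log^8 z$.

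\textbf{Step 1: bound each remainder.} Fix a squarefree $d \mid P(z)$. By the Chinese remainder theorem, the condition $d \mid a_n$ depends only on $n$ modulo $d$ (modulo $4$ in the prime-$2$ component when $N$ is odd). Let $\rho(d)$ be the number of admissible residue classes. From the case analysis before Lemma~\ref{lem:K}, at an odd prime $p$ we have $\rho(p) \in \{1,2\}$, equal to $p\,g(p)$. At $p=2$ a direct check gives at most $2$ classes out of the modulus, with density $1/2$. Hence $|A|g(d)$ is exactly the main term $(N-1)\rho(d)/d$. Counting $n \in [1,N-1]$ in each admissible class contributes an error of at most $1$ per class, so
$$|r_d(A)| \leq \rho(d) \leq 2^{\omega(d)}.$$
The prime $2$ may need a slightly more careful accounting, but it only affects an absolute constant.

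\textbf{Step 2: collapse the divisor weights.} Since $d$ is squarefree, $\tau_4(d) = 4^{\omega(d)}$, and therefore
$$R_4(A,D) \leq \sum_{\substack{d \mid P(z)\\ d<D}} 8^{\omega(d)}.$$

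\textbf{Step 3: Rankin-type bound.} Use $1 \leq D/d$ for $d<D$. This gives
$$R_4(A,D) \leq D\sum_{d\mid P(z)} \frac{8^{\omega(d)}}{d} = D\prod_{p<z}\Big(1+\frac{8}{p}\Big).$$
It then remains to show $\prod_{p<z}(1+8/p) \leq 0.591\log^8 z$ for $z \geq 10^{10}$.

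\textbf{Step 4: the Euler product (the main obstacle).} Write $\log(1+8/p) = 8/p - E(p)$ with $E(p) \geq 0$. For the primes $p \leq P_0$, for some moderate cutoff $P_0$ such as $10^5$ or $10^6$, compute $\prod_{p\le P_0}(1+8/p)$ exactly by computer. For $P_0 < p < z$, use $\log(1+8/p) \leq 8/p$ together with Theorem 5 of Rosser and Schoenfeld \cite{Rosser1962}. That result gives $\sum_{p<z}1/p < \log\log z + B + 1/(2\log^2 z)$, and similarly a lower bound for $\sum_{p\le P_0}1/p$, so the tail sum is at most $\log\log z - \log\log P_0 + \epsilon$. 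Combining the two ranges yields
$$\prod_{p<z}\Big(1+\frac8p\Big) \leq \frac{\prod_{p\le P_0}(1+8/p)}{\log^8 P_0}\, e^{8\epsilon}\,\log^8 z.$$
The computation for the constant must be arranged so that it lands below $0.591$. The danger is that the small primes give a large factor; roughly, the limit is $e^{8\gamma}\prod_p(1+8/p)(1-1/p)^8$. If $0.591$ is not reached immediately, I would first increase $P_0$ to shrink $\epsilon$. The remaining source of slack is the precise handling of $p=2$ in Step 1: there $|r_d|$ may really be at most $1$ per $2$-part, which would improve the factor $(1+8/2)$ to something smaller.
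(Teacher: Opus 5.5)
Your proof is correct. Steps 1 and 2 match the paper; after that you take a different route.

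\textbf{Steps 1--2.} These give $|r_d(A)|\le\tau(d)$ and hence $R_4\le\sum_{d\mid P(z),\,d<D}\tau_8(d)$, exactly as in the paper. Your hedge about $p=2$ is unnecessary. For odd $N$ the two admissible classes mod $4$ have density $1/2=g(2)$, so $|r_d(A)|\le 2^{\omega(d)}$ holds with no extra constant.

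\textbf{The paper's route.} It applies Rankin's trick with $\delta=1-\alpha/\log z$ rather than $\delta=1$. It bounds $\prod_{p<z}(1+8p^{-\delta})\le\exp(8\sum_{p<z}p^{-\delta})$, uses $p^{\eta}\le 1+\eta\log p\,z^{\eta}$, and applies Rosser--Schoenfeld to $\sum_{p<z}1/p$ and $\sum_{p<z}\log p/p$. This gives $D\log^8 z\,\exp(2.1072+8\alpha e^{\alpha}-18.4\alpha)\approx 0.590\,D\log^8 z$ at $\alpha=0.457$. In effect, the paper uses the truncation $d<D$ to win back a factor of about $14$ from the crude bound $e^{8\cdot 0.2634}D\log^8 z\approx 8.2\,D\log^8 z$ that $\delta=1$ plus $1+x\le e^x$ would give.

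\textbf{Your route.} You discard the truncation but keep the small-prime Euler factors exact, which is where almost all of that crudeness lives. Indeed $\prod_p(1+8/p)e^{-8/p}\approx 0.004$, and $p=2$ alone contributes $5e^{-4}\approx 1/11$. The computation you were unsure about lands with a wide margin:
\begin{itemize}
\item the limiting constant $e^{8\gamma}\prod_p(1+8/p)(1-1/p)^8$ is about $0.032$;
\item $P_0=31$, with the Rosser--Schoenfeld lower bound at $P_0$ and upper bound at $z\ge 10^{10}$, already gives $\prod_{p<z}(1+8/p)<0.1\log^8 z$;
\item $P_0=10^5$ gives roughly $0.033$.
\end{itemize}
So you need neither a larger $P_0$ nor any refinement at $p=2$.

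\textbf{What each buys.} Your approach costs a small finite computation, but it yields a constant roughly an order of magnitude smaller than $0.591$, and the constant does not depend on $s$. The paper's approach needs no computation. Its intermediate inequality $R_4(A,D)<D^{\delta}\prod_{p<z}(1+8p^{-\delta})$ is exactly what is reused in Case~2 of Theorem~\ref{thm:last} with fixed $\delta\in\{0.2,0.8,0.9\}$. The two ideas can also be combined.
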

\begin{proof}
    Note that (see the proof of Theorem 7.2 in Nathanson \cite{Nathanson} for example)
$$|r_d(A)| \leq 2^{\omega(d)} = \tau(d)$$
and therefore
\[
R_4(A, D) \leq \sum_{\substack{d\mid P(z) \\ d<D}} \tau_4(d)\cdot \tau(d) = \sum_{\substack{d\mid P(z) \\ d<D}} \tau_8(d).
\]
An application of Rankin's trick gives us that
\begin{align*}
    R_4(A, D) & \leq \sum_{\substack{d\mid P(z) \\ d<D}} \tau_8(d) \\
    & < \sum_{\substack{d\mid P(z) \\ d<D}} \tau_8(d) \Big(\frac{D}{d}\Big)^{\delta} \\
    & = D^{\delta} \sum_{\substack{d\mid P(z) \\ d<D}} \frac{\tau_8(d)}{d^{\delta}} \\
    & < D^{\delta} \sum_{d\mid P(z)} \frac{\tau_8(d)}{d^{\delta}} \\
    & = D^{\delta} \prod_{p < z} \Big(1 + \frac{8}{p^{\delta}} \Big).
\end{align*}
Employing the estimate $1+x < e^x$ for $x > 1$ gives us that
$$R_4(A, D) < D^{\delta} \exp \bigg( 8 \sum_{p < z} \frac{1}{p^{\delta}}\bigg).$$
We write $\delta = 1 - \eta$ with $\eta > 0$ to be chosen later. Applying the inequality $e^x < 1 + xe^{x}$ for $x > 0$ gives us
\begin{align*}
    R_4(A, D) &< D^{1- \eta} \exp\bigg(8 \sum_{p<z} \frac{1}{p} \cdot e^{\eta \log p} \bigg) \\
    &< D^{1- \eta} \exp\bigg(8 \sum_{p<z} \frac{1}{p} (1+\eta \log p \, z^{\eta}) \bigg) \\
    &= D^{1- \eta} \exp\bigg(8 \sum_{p<z} \frac{1}{p}\bigg) \exp\bigg( 8 \eta z^{\eta} \sum_{p < z} \frac{\log p}{p} \bigg). 
\end{align*}
By Theorem 5 of Rosser and Schoenfeld \cite{Rosser1962}, for $z \geq 10^{10}$ we have that
\begin{align*}
    &\sum_{p < z} \frac{1}{p} < \log \log z + 0.2634, \\
    &\sum_{p<z} \frac{\log p}{p} < \log z.
\end{align*}
Direct substitution yields the bound
$$R_4(A, D) < D^{1- \eta} \exp(8 \log \log z + 2.1072 + 8 \eta z^{\eta} \log z).$$
Now, we let $\eta = \alpha/\log z$ where $\alpha > 0$ will be chosen later so as to optimise. We get that
\begin{equation}
    R_4(A,D) < D \log^8 z \,\exp\bigg(2.1072 + 8 \alpha e^{\alpha} - \alpha \frac{\log D}{\log z} \bigg).
\end{equation}
By fixing $\frac{\log D}{\log z}=18.4$, and by setting $\alpha=0.457$, the result follows.
\end{proof}

\section{Proof of Theorem \ref{maintheorem}} \label{analyticSection}
We will first use the following lemma to deal with small values of $N$.

\begin{lemma}\label{lem:primegaps}
    Let $m \in \mathcal{P} \cup \{1\}$ and $M \in \mathbb{Z}^+$. If $m \cdot 1476\leq2^{M-2}$, then for every $N \leq m \cdot 4 \cdot 10^{18}$ there exists a decomposition $N=a+b$ such that $\Omega(ab) \leq M$.
\end{lemma}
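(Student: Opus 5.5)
Take $a = mp$ and $b = N - mp$, where $p$ is a prime. The numbers $4\cdot 10^{18}$ and $1476$ point to the verified Goldbach computation of Oliveira e Silva, Herzog and Pardi. That computation shows every even number up to $4\cdot10^{18}$ is a sum of two primes, and that the maximal gap between consecutive primes below $4\cdot 10^{18}$ is $1476$. I will use both facts.

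First, suppose $m \mid N$ and $N' := N/m \ge 2$, so that $N' \le 4\cdot10^{18}$.
\begin{itemize}
\item If $N'$ is even, Goldbach gives $N' = p+q$, hence $N = mp + mq$. Then $\Omega(ab) = \Omega(m^2pq) \le 4 \le M$, since $2^{M-2} \ge 1476$ forces $M \ge 13$.
\item If $N'$ is odd, write $N' = 3 + (N'-3)$ and apply Goldbach to $N'-3$ when $N' - 3 \ge 4$, giving $N' = p+q+r$. Then take $a = mp$ and $b = m(q+r)$, so $\Omega(ab) \le 2 + 1 + 1 + 2 + 1$ or so, which is again small. Small values of $N'$ are checked directly.
\end{itemize}

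The main case is $m\nmid N$, or more uniformly any $N$. Here use the prime gap bound instead.
\begin{itemize}
\item If $N \le 1476m$ (with a little care at the edges), choose $a = 1$ or $a = 2^j$ so that $b$ is small. Any $b < 2^{M-1}$ has $\Omega(b) \le M-2$. For example, take $a=1$ and $b = N-1 < 1476m \le 2^{M-2}$, which gives $\Omega(ab) = \Omega(N-1) \le M-2$.
\item Otherwise $N/m > 1476$. Pick the largest prime $p$ with $mp < N$. Since $p < N/m \le 4\cdot10^{18}$, the gap bound gives a prime in every interval of length $1476$ below $4\cdot10^{18}$, so $N/m - p \le 1476$ and hence $0 < b = N - mp \le 1476m \le 2^{M-2}$.
\item Then $\Omega(b) \le \log_2 b \le M-2$, while $\Omega(a) = \Omega(mp) \le 2$ because $m$ is prime or $1$. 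So $\Omega(ab) \le M$.
\end{itemize}

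This second argument alone already suffices for all $N$, so the divisibility case above is unnecessary.

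The only delicate step is the edge condition. I need a prime $p$ satisfying both $N/m - 1476 \le p < N/m$ and $p \ge 2$, which is why small $N$ is handled by the direct choice $a=1$. I also need the gap statement to hold in the precise form "every interval $[x-1476, x)$ with $1476 < x \le 4\cdot 10^{18}$ contains a prime". I would cite it as a consequence of the maximal gap $1476$ below $4\cdot10^{18}$, noting that the first gap of size $1476$ occurs near $1.4\cdot10^{18}$ and that no larger gap appears up to $4\cdot10^{18}$.
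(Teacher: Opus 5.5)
Your main argument is the paper's proof: take the largest prime $p<N/m$, use the maximal prime gap $1476$ below $4\cdot 10^{18}$ to get $0<N-mp\le 1476m\le 2^{M-2}$, and combine $\Omega(N-mp)\le M-2$ with $\Omega(mp)\le 2$. Your separate choice $a=1$ for small $N$ also covers the edge case $N\le 2m$, where no prime lies below $N/m$ and which the paper leaves implicit. You are right to drop the Goldbach detour, and you should, because its odd subcase is wrong as written: $b=m(N'-3)$ contributes $\Omega(N'-3)$, and that quantity is unbounded, not at most $2$.
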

\begin{proof}
    Let $P := \{p \leq 4 \cdot 10^{18} : p \in \mathcal{P} \}$, and $mP := \{mp : p \in P \}$. Let $p_i$ denote the $i$th prime number and $\pi(n-):=\lim_{x \to n^-}\pi(x)$. By Herzog, Pardi and Silva \cite{eSilva2014}, we have $p_{i+1}-p_i \leq 1476$ for $p_i \in P$. For $N \leq m \cdot 4 \cdot 10^{18}$, we note that $N-mp_{\pi(\frac{N}{m}-)} \leq mp_{\pi(\frac{N}{m}-)+1} - mp_{\pi(\frac{N}{m}-)}\leq m \cdot 1476$, and so $\Omega(N-mp_{\pi(\frac{N}{m}-)}) \leq \log_2(m \cdot 1476)\leq M-2$. Hence we have $N = mp_{\pi(\frac{N}{m}-)} + (N-mp_{\pi(\frac{N}{m}-)})$ and $\Omega(mp_{\pi(\frac{N}{m}-)}) + \Omega(N-mp_{\pi(\frac{N}{m}-)}) \leq M$.
\end{proof}

We will use the following lemma to deal with large values of $N$.

\begin{lemma}\label{lem:main}
    Let $N \in \mathbb{N}$ with $N \geq 2$ and $M \in \mathbb{R}_{>1}$, and suppose that $S(A,N^{1/M})>0$ for $N \geq N_0$. Then there exists $a,b \in \mathbb{N}$ such that $N=a+b$ with $\Omega(ab) < 2M$ if $N$ is even and $\Omega(ab) < 2M+1$ if $N$ is odd.
\end{lemma}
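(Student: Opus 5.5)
Since $S(A,N^{1/M})>0$, some term $a_n$ of $A$, with $1\le n\le N-1$, has no prime factor $p<z:=N^{1/M}$. I will take $a=n$ and $b=N-n$, so that $a+b=N$ with $a,b$ positive integers. The bound on $\Omega(ab)$ then comes from comparing the size of $a_n$ with the smallest prime that can divide it.

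\textbf{Even $N$.} Here $a_n=n(N-n)=ab$. Every prime factor of $ab$ is at least $z$, so
\[
z^{\Omega(ab)}\le ab .
\]
By AM--GM, $ab=n(N-n)\le N^2/4<N^2=z^{2M}$. Taking logarithms gives $\Omega(ab)<2M$. The inequality is strict because $ab<z^{2M}$ strictly and $z>1$.

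\textbf{Odd $N$.} Here $a_n=n(N-n)/2$, which is an integer because exactly one of $n$ and $N-n$ is even. Every prime factor of $ab/2$ is at least $z$, so the same argument gives
\[
z^{\Omega(ab/2)}\le \frac{ab}{2}\le \frac{N^2}{8}<z^{2M},
\]
hence $\Omega(ab/2)<2M$. Since $\Omega(ab)=\Omega(ab/2)+1$, this gives $\Omega(ab)<2M+1$.

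The only point needing care is the degenerate case where $z$ is small. If $z\le 2$, the condition $S(A,z)>0$ sieves nothing, and the inequality $z^{\Omega}\le$ (the term) could fail when $z<2$. This is harmless: every prime is at least $2\ge z$ when $z\le 2$, so the argument still holds, and the strict inequality $N^2/4<N^2$ always uses $z>1$, which follows from $N\ge 2$ and $M>1$ being finite. The one thing to verify is that every prime dividing the sifted term really is $\ge z$, which is exactly what $S(A,z)>0$ means by the definition of $S$. There is no genuine obstacle here; the role of $N_0$ is only to indicate for which $N$ the hypothesis is assumed.
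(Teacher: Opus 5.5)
Your proof is correct and follows the paper's argument: you take a sifted term, set $a=n$ and $b=N-n$, bound the term by $N^2/4<N^2=z^{2M}$ to get fewer than $2M$ prime factors, and add one for the removed factor $2$ when $N$ is odd. Your degenerate-case paragraph is unnecessary, and its suggestion that $z^{\Omega}\le a_n$ might fail for $z<2$ is mistaken: a term with no prime factor below $z$ has all its prime factors at least $z$, so that inequality holds for every $z>0$.
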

\begin{proof}
    Let $r$ be an element of $A$ that remains after sieving (and note that $r$ can be written as either $n(N-n)$ or $n(N-n)/2$). First, we note that $r\leq n(N-n)\leq \frac{N}{2}\left(N-\frac{N}{2}\right)= \frac{N^2}{4}$. Now, as all of $r$'s primes factors are greater than or equal to $N^{1/M}$, if $\Omega(r) \geq 2M$ we would have $r \geq N^\frac{2M}{M}=N^2$, which contradicts the first point. Thus, $\Omega(r) < 2M$. 
    
    Now we define $a:=n$ and $b:=N-n$. If $N$ is even, then $2M > \Omega(r)=\Omega(ab)$. If $N$ is odd, then $2M > \Omega(r)=\Omega(ab)-1$. This completes the proof.
\end{proof}

We now prove the main result.

\begin{theorem}\label{thm:last}
    Let $N \in \mathbb{N}$. There exists $a,b \in \mathbb{N}$ such that $N=a+b$ with $\Omega(ab) \leq 39$ if $N$ is even and $\Omega(ab) \leq 40$ if $N$ is odd.
\end{theorem}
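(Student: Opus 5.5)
We split into a large range of $N$, handled by the sieve through Lemma~\ref{lem:main}, and a small range, handled by Lemma~\ref{lem:primegaps}. Take $M = 19.5$, so $z = N^{1/19.5}$. Lemma~\ref{lem:main} then gives $\Omega(ab) < 39$ for even $N$ and $\Omega(ab) < 40$ for odd $N$, which is stronger than required. If a little slack is needed, $M = 20$ still gives $\Omega(ab)\le 39$ and $\le 40$ respectively, because the inequalities are strict. The remaining task is to show $S(A,N^{1/M})>0$ for all $N \geq N_0$ and to cover $N<N_0$ separately.

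\textbf{Large $N$.} Apply Theorem~\ref{LowerBoundSieveTheorem} with $\kappa=2$ and $K=3$ (Lemma~\ref{lem:K}), so $k = 2+\log 3 \approx 3.0986$. Take $D = z^{18.4}$, i.e.\ $s=18.4$, which satisfies $s \geq 2k+3 \approx 9.197$.

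First compute the sieve factor $1 - \frac{s+3}{2e^k}\left(\frac{2ek}{s-3}\right)^{(s-3)/2}$ numerically and check that it is a positive constant $c$. Here $2ek/(s-3) \approx 1.094$, raised to the power $7.7$ gives about $2.0$, and $(21.4/(2e^{k}))\approx 0.48$. So the correction is roughly $0.96$ and $c\approx 0.04$. This is tight, and I will recompute it carefully; if needed, adjust $s$, since $s$ must balance the main term against the remainder.

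With $X = |A| = N-1$ and Lemma~\ref{lem:V} (valid when $z\ge 10^{10}$), the main term is at least $c(N-1)\cdot 0.2749/\log^2 z$. By Lemma~\ref{lem:R}, the error is at most $2\cdot 0.591\, z^{18.4}\log^8 z$. Since $z^{18.4} = N^{18.4/M}$ with $M>18.4$, the error is $N^{1-\epsilon}$ times logarithmic factors, with $\epsilon = 1-18.4/M$. Positivity reduces to
\[ 0.2749\,c\,(N-1) > 1.182\, N^{18.4/M} (\log z)^{10}. \]
For $M=19.5$ we have $\epsilon \approx 0.056$, and $(\log z)^{10}$ is large, so $N_0$ will be astronomically large, roughly $\log N_0$ in the thousands. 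I will solve numerically for $N_0$. The constraint $z\ge 10^{10}$ forces $N \geq 10^{195}$ anyway. Increasing $M$ to $20$ makes $N_0$ smaller.

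\textbf{Small $N$.} Lemma~\ref{lem:primegaps} only reaches $m\cdot 4\cdot 10^{18}$ with $m\cdot 1476 \le 2^{M-2}$. With $M=39$ this allows $m \le 2^{37}/1476 \approx 9.3\times 10^7$, which covers only $N \lesssim 4\times 10^{26}$, far below $N_0$. This gap is the main obstacle.

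To bridge it, I would iterate the prime-gap idea using a product of primes instead of a single prime $m$. Write $N = mp + r$ where $mp$ has few prime factors and $r$ is small. Alternatively, use the sieve with a larger $M$ for intermediate ranges, applying Lemma~\ref{lem:main} with several values of $M$ and trading $s$ accordingly. Concretely, I would split the whole range into dyadic-type intervals in $\log N$. On each interval I would choose $M$ (up to the allowed budget $19.5$ for even $N$ and $20$ for odd $N$) and $s$ to certify positivity, and verify this by computer. Where the sieve fails at small $z$ (below $10^{10}$), I would fall back on Lemma~\ref{lem:primegaps}, or on an extended computation of $V(z)$ directly for smaller $z$.

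Finally, combine the ranges and check the parity bookkeeping, which yields $39$ for even $N$ and $40$ for odd $N$.
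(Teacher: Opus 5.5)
Your treatment of large $N$ matches the paper: you take $M=20$, $s=18.4$, $k=2+\log 3$ and combine Lemmas~\ref{lem:K}, \ref{lem:V}, \ref{lem:R} to get $S(A,N^{1/20})>0$ for $N>10^{200}$. Your use of Lemma~\ref{lem:primegaps} for small $N$ also matches. (Note that $M=20$ already gives $\Omega(ab)\le 39$ for even $N$. There is no reason to hold even $N$ to $M=19.5$, which would push the analytic threshold well past $10^{300}$.)

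The genuine gap is the intermediate range. You flag it as the main obstacle but never close it, and the bridges you list do not work as stated:
\begin{itemize}
\item Lemma~\ref{lem:primegaps} with any admissible prime $m$ reaches only $N\lesssim 10^{27}$, i.e.\ $z=N^{1/20}\lesssim 22$. So it cannot serve as a fallback for $z$ up to $10^{10}$.
\item Iterating it with a larger or composite $m$ is hopeless. The leftover is controlled only by $\Omega(N-mp)\le\log_2(m\cdot 1476)$, and covering $N=10^{200}$ needs $m\approx 10^{181}$, which makes this bound over $600$.
\item You cannot simply run Lemmas~\ref{lem:V} and~\ref{lem:R} below $z=10^{10}$. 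With those constants the ratio of main term to error is about $0.0085\,z^{1.6}/\log^{10}z$. That is roughly $2$ at $z=10^{10}$ but only about $0.15$ at $z=10^{9}$.
\end{itemize}
So it is not the sieve that ``fails'' for smaller $z$; it is these asymptotic packagings of $V(z)$ and $R_4$.

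The missing idea is to keep $M=20$ and $s=18.4$ throughout and turn the sieve inequality itself into a finite computation. Take primes $p<q$ and any $z\in(p,q]$ with $N=z^{20}$. Then:
\begin{itemize}
\item $|A|=N-1>p^{20}-1$;
\item $V(z)\ge V(q)$, where $V(q)$ is computed exactly from the worst case $g(p)=2/p$ for odd $p$;
\item from the pre-asymptotic Rankin bound in the middle of the proof of Lemma~\ref{lem:R}, $R_4(A,z^{s})<q^{s\delta}\prod_{p'<q}(1+8p'^{-\delta})$, with $\delta\in(0,1)$ free.
\end{itemize}
This gives an explicit lower bound $S(p,q)$ for $S(A,z)$, valid on the whole interval $(p,q]$. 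The paper checks by computer that $S(q_i,q_{i+1})>0$ on a sparse prime grid $2=q_1<\dots<q_m=10^{10}+147$, choosing $\delta\in\{0.2,0.8,0.9\}$ per interval. Exact $V$ and a $\delta$ tuned to the size of $z$ are what recover the losses built into Lemmas~\ref{lem:V} and~\ref{lem:R}.

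This covers every $z\le 10^{10}$, so the sieve works all the way down to $N\approx 4\cdot 10^{18}$, and only $m=1$ is needed in Lemma~\ref{lem:primegaps}. Without this step, or an equivalent explicit certificate with a remainder bound valid for small $z$, your argument leaves $N$ between about $10^{27}$ and $10^{200}$ unproved.
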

\begin{proof}

We will split into three cases which correspond to small, medium and large values of $N$. For the first case, we will prove the theorem directly. For the second and third case, we will show that $S(A,z)>0$ for $z=N^{1/20}$ and apply Lemma~\ref{lem:main}.

\singlespacing

    \noindent \textbf{Case 1: $N \leq 4 \cdot 10^{18}$}
    
    Apply Lemma~\ref{lem:primegaps} with $m=1$.

\singlespacing 

    \noindent \textbf{Case 2: $4 \cdot 10^{18} <N \leq 10^{200}$}

    In this section, we aim to show that $S(A,z)>0$ for $z$ in the required range. To compute this value exactly for each $z$ would be computationally infeasible. As such, we employ various techniques to speed up the computation. Firstly, we employ the bound
    $$R_4(A,D)<D^\delta\prod_{p<z}\left(1+\frac{8}{p^\delta}\right), $$
    which was derived mid-way through the proof of Lemma~\ref{lem:R}. Secondly, we will construct a function $S(p,q)$ (where $p,q$ are primes and $p<q$) such that $S(A,z)>S(p,q)$ for $z\in (p,q]$. In this manner, we need only verify that $S(p,q)>0$ to show that $S(A,z)>0$ for all $z\in (p,q]$. We now look to define the function $S(p,q)$.

    Let $p$ and $q$ be prime numbers with $p<q$, and let $p_i$ denote the $i$th prime number. For any $p<z:=N^{1/r}\leq q$, we have from Theorem \ref{LowerBoundSieveTheorem} that
    \begin{equation*}
        S(A,z) \geq (p^{r}-1)V(q)\left\{1 - \frac{s+3}{2e^k} \left(\frac{2ek}{s-3}\right)^{(s-3)/2}\right\} - 2q^{s\delta}\prod_{p'<q}\left(1+\frac{8}{(p')^\delta}\right)=:S(p,q).
    \end{equation*}
    To verify Case 2, we will construct a sequence of primes $(q_i)_{i=1}^m \subset \mathcal{P}$ such that $2=q_1<q_2<\cdots< q_m=10^{10}+147$ and such that, for each $i=1,\ldots,m-1$, $S(q_i,q_{i+1})>0$ for a suitable choice of $s$ and $\delta$. For our computations, we will fix $r=20$, $s=18.4$ and $\delta \in \{0.2, 0.8, 0.9\}$. We will define our sequence of primes such that $q_{i+1}-q_i$ increases as $i$ increases. In particular, we define
    $$q_i = \begin{cases}
        p_{\pi(q_{i-1})+1} & 1 \leq \pi(q_{i-1}) < 100 \\
        p_{\pi(q_{i-1})+10^{\ell-1}} & 10^{\ell}\leq \pi(q_{i-1})< 10^{\ell+1} \text{ for } \ell \geq 2.
    \end{cases}$$
    Using the ``verifyS.jl'' file \cite{gitrepo}, it was verified (within 5 minutes) that, for all $i=1,\ldots,m-1$, there exists a $\delta \in \{0.2,0.8,0.9\}$ such that $S(q_i,q_{i+1})>0$. Hence Case 2 of the theorem follows.

\singlespacing
    \noindent \textbf{Case 3: $N>10^{200}$}
    
    We apply Theorem \ref{LowerBoundSieveTheorem} with parameters $s=18.4, k=2+\log(3)$ and $ z=N^{1/20}$. By applying Lemmas~\ref{lem:K}, \ref{lem:V} and \ref{lem:R}, we have that
    \begin{align*}
        S(A,z) \geq \frac{4(N-1)}{\log^2N}-\frac{1.182}{20^8}N^{0.92}\log^8N>0
    \end{align*}
    for all $N > 10^{200}$.
\end{proof}

\section{Future Work}\label{improvements}
To begin, we note that one can use our methods to get an $N_0$ such that Theorem~\ref{maintheorem} holds for the case where $\Omega(ab) \leq 37$ and $N\geq N_0$. However, the subsequent computer verification of Case 2 of Theorem~\ref{thm:last} would become computationally infeasible. Additionally, the value 37 can not be lowered by a direct application of Theorem~\ref{LowerBoundSieveTheorem}. In our proof, we show that $S(A,N^{1/20})>0$ for $N \geq N_0$. In particular, we require that the main term of the sieve asymptotically dominates the error term, so that such an $N_0$ exists. If we instead choose $z=N^{1/M}$ for some $M \in \mathbb{R}_{>1}$, then the main term is $O(\frac{N}{\log^2N})$ and the error term is $O(N^{s/M}\log^8(N))$. Thus we require $M>s$. However, for the main sieve term to be positive, we require that 
$$1 - \frac{s+3}{2e^k} \left(\frac{2ek}{s-3}\right)^{(s-3)/2}>0$$
which implies that $18.3<s<M$. Therefore, by applying Lemma~\ref{lem:main}, the lowest obtainable value of $M$ (using our methods) would be 37.

To resolve this issue, one could potentially furnish a better value for $K$ by reworking the detailed proof of Theorem \ref{LowerBoundSieveTheorem} so that a different range could be taken for $w$ and $z$. In particular, one could then take $K$ close to 1 and deal with the smaller primes by hand. This would then allow an $N_0$ to be furnished for $\Omega(ab) \leq M$ where $M<40$. Additionally, it is highly likely that employing a weighted sieve would improve things still further; see \cite{DudekJohnston} for the explicit application of Kuhn's weighted sieve as well as \cite{JohnstonSorensonThomasWebster} for a similar treatment involving so-called Richert weights. Additionally, Franze \cite{Franze} has published refinements of Selberg's sieve that, if made explicit, could improve the result further. 

\section{Acknowledgements}

The authors wish to thank Daniel Johnston for sharing this problem with us, which was suggested to him by a reviewer of his PhD thesis. We thank DJ further for pointing us at Theorem \ref{LowerBoundSieveTheorem} in Opera de Cribro \cite{FI2010} as the ready-to-fire tool needed for the problem. We also thank Peter Campbell for correcting some mistakes in an earlier version of the result. Thanks also must go to the referee for their helpful comments.

\printbibliography

\end{document}